\newtheorem{theorem}{Theorem}
\newtheorem{conjecture}[theorem]{Conjecture}
\newtheorem{corollary}[theorem]{Corollary}
\newenvironment{proof}[1][Proof.]{\begin{trivlist}
\item[\hskip \labelsep {\bfseries #1}]}{\end{trivlist}}
\newenvironment{acknowledgement}[1][Acknowledgement]{\begin{trivlist}
\item[\hskip \labelsep {\bfseries #1}]}{\end{trivlist}}
\newcommand{\AmS}{{\protect\the\textfont2
  A\kern-.1667em\lower.5ex\hbox{M}\kern-.125emS}}
\title{On maximum matchings in almost regular graphs}
\author{Petros A. Petrosyan\address[MCSD]{Institute for Informatics and Automation Problems,\\
National Academy of Sciences, 0014, Armenia}%
\address{Department of Informatics and Applied Mathematics,\\
Yerevan State University, 0025, Armenia}%
\thanks{email: pet\_petros@\{ipia.sci.am, ysu.am, yahoo.com\}}}
\begin{document}

\maketitle

\begin{abstract}

In 2010, Mkrtchyan, Petrosyan and Vardanyan proved that every graph
$G$ with $2\leq \delta(G)\leq \Delta(G)\leq 3$ contains a maximum
matching whose unsaturated vertices do not have a common neighbor,
where $\Delta(G)$ and $\delta(G)$ denote the maximum and minimum
degrees of vertices in $G$, respectively. In the same paper they
suggested the following conjecture: every graph $G$ with
$\Delta(G)-\delta(G)\leq 1$ contains a maximum matching whose
unsaturated vertices do not have a common neighbor. Recently,
Picouleau disproved this conjecture by constructing a bipartite
counterexample $G$ with $\Delta(G)=5$ and $\delta(G)=4$. In this
note we show that the conjecture is false for graphs $G$ with
$\Delta(G)-\delta(G)=1$ and $\Delta(G)\geq 4$, and for $r$-regular graphs when $r\geq 7$.\\

Keywords: bipartite graph, regular graph, maximum matching
\end{abstract}

\section{Introduction}\

Throughout this note all graphs are finite, undirected, and have no
loops, but may contain multiple edges. Let $V(G)$ and $E(G)$ denote
the sets of vertices and edges of $G$, respectively. For a graph
$G$, let $\Delta(G)$ and $\delta(G)$ denote the maximum and minimum
degrees of vertices in $G$, respectively. For two distinct vertices
$u$ and $v$ of a graph $G$, let $E(uv)$ denote the set of all edges
of $G$ joining $u$ with $v$. An $(a,b)$-biregular bipartite graph
$G$ is a bipartite graph $G$ with the vertices in one part all
having degree $a$ and the vertices in the other part all having
degree $b$. The terms and concepts that we do not define
can be found in \cite{b3}.\\

In \cite{b1}, Mkrtchyan, Petrosyan and Vardanyan proved the
following result.

\begin{theorem}
\label{mytheorem1} Every graph $G$ with $2\leq \delta(G)\leq
\Delta(G)\leq 3$ contains a maximum matching whose unsaturated
vertices do not have a common neighbor.
\end{theorem}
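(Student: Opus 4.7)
The plan is to prove Theorem~\ref{mytheorem1} by an extremal-matching argument. Among all maximum matchings of $G$, I would choose $M$ so as to minimize
$$\Phi(M) := |\{\{u_1,u_2\}\subseteq U(M): u_1\neq u_2,\ N(u_1)\cap N(u_2)\neq \emptyset\}|,$$
the number of unordered pairs of distinct $M$-unsaturated vertices that share a neighbor, where $U(M)$ denotes the set of $M$-unsaturated vertices, and then argue that $\Phi(M)=0$, which is precisely the conclusion sought.

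Suppose for contradiction that $\Phi(M)>0$, witnessed by $u_1,u_2\in U(M)$ and a common neighbor $v\in N(u_1)\cap N(u_2)$. Maximality of $M$ forces two standard observations: $U(M)$ is independent, and every vertex adjacent to a vertex of $U(M)$ is $M$-saturated. In particular $v$ is $M$-saturated by some edge $vw\in M$, and no neighbor of $w$ distinct from $v$ lies in $U(M)$, else a path $u_1\,v\,w\,u'$ would be $M$-augmenting. I would then consider the maximum matching $M':=(M\setminus\{vw\})\cup\{vu_1\}$, whose unsaturated set is $U(M')=(U(M)\setminus\{u_1\})\cup\{w\}$; the pair $\{u_1,u_2\}$ no longer contributes to $\Phi(M')$, and the remaining task is to verify $\Phi(M')<\Phi(M)$.

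To establish this inequality, observe that the only potentially new contributions to $\Phi(M')$ are pairs of the form $\{w,u\}$ with $u\in U(M)\setminus\{u_1\}$, each requiring a common neighbor of $w$ and $u$. Because $\Delta(G)\le 3$, the vertex $w$ has at most two neighbors other than $v$, so at most two new pairs can appear; simultaneously, the pairs previously indexed by $u_1$ and a second neighbor of $u_1$ vanish. A short case analysis on $\deg(w)$ and $\deg(u_1)\in\{2,3\}$, together with how many vertices of $U(M)$ are adjacent to each neighbor of $w$, should yield $\Phi(M')<\Phi(M)$ outright in every configuration except a handful of symmetric tight ones, which would be handled by iterating the same swap.

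The main obstacle I anticipate is the extremal subcase in which $\deg(w)=3$, both neighbors of $w$ other than $v$ are common neighbors of $w$ with \emph{distinct} vertices of $U(M)\setminus\{u_1,u_2\}$, and $u_1$ contributed only the pair $\{u_1,u_2\}$ to $\Phi(M)$; naively the swap then increases $\Phi$. To resolve this, I would trace the alternating structure anchored at $w$ through the $M$-edges incident to its two neighbors: the bound $\Delta(G)\le 3$ keeps branching minimal, while $\delta(G)\ge 2$ guarantees that the alternating walk never terminates at a vertex lacking a further edge to extend along. Such a walk must close into either an $M$-augmenting path (contradicting maximality of $M$) or a short alternating cycle whose reversal yields a maximum matching with strictly smaller $\Phi$, contradicting the extremal choice of $M$ and completing the proof.
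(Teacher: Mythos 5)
First, note that the paper does not actually prove Theorem~\ref{mytheorem1}: it is imported as a known result from Mkrtchyan, Petrosyan and Vardanyan \cite{b1}, so there is no in-paper proof to compare your plan against. Judged on its own, your plan has a concrete gap at its very first step. After the swap $M'=(M\setminus\{vw\})\cup\{vu_1\}$, the vertex $w$ becomes unsaturated while $u_2$ remains unsaturated, and $v$ itself is a common neighbor of $w$ and $u_2$ (it is adjacent to $w$ because $vw\in M$, and to $u_2$ by hypothesis). Hence the pair $\{u_1,u_2\}$ is not removed from the count so much as replaced by $\{w,u_2\}$, and your accounting, which charges new pairs only to the ``at most two neighbors of $w$ other than $v$,'' misses this pair entirely. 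So $\Phi(M')<\Phi(M)$ already fails in the generic situation, not merely in the extremal subcase you flag.

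Second, the proposed rescue of the hard case cannot work as stated: switching a maximum matching along an $M$-alternating cycle leaves the set of saturated vertices unchanged, hence leaves $\Phi$ unchanged, so ``a short alternating cycle whose reversal yields a maximum matching with strictly smaller $\Phi$'' is not an available contradiction. More broadly, the hypothesis $\delta(G)\geq 2$ is essential --- the statement already fails for $K_{1,3}$, where every maximum matching leaves two leaves unsaturated with the center as a common neighbor --- yet it enters your plan only through one vague sentence about alternating walks not terminating; any correct proof must use it in a load-bearing way. The theorem is genuinely nontrivial (the published proof in \cite{b1} is a substantial structural argument), and the local-swap scheme as designed does not reduce to it.
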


\begin{corollary}
\label{mycorollary1} Every cubic graph $G$ contains a maximum
matching whose unsaturated vertices do not have a common neighbor.
\end{corollary}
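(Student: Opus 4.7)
The plan is to derive this corollary as an immediate consequence of Theorem \ref{mytheorem1}. By definition, a cubic graph is a $3$-regular graph, so every vertex has degree exactly $3$. In particular, $\delta(G) = \Delta(G) = 3$, which yields $2 \leq \delta(G) \leq \Delta(G) \leq 3$, i.e., the hypothesis of Theorem \ref{mytheorem1} is satisfied.

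First I would state this observation explicitly, and then invoke Theorem \ref{mytheorem1} to conclude that $G$ contains a maximum matching whose unsaturated vertices have no common neighbor. There is essentially no obstacle here: the only thing to verify is that ``cubic'' indeed implies the degree bounds required by the theorem, which is definitional. No further case analysis, construction, or auxiliary lemma is needed, so the entire ``proof'' consists of checking hypotheses and quoting Theorem \ref{mytheorem1}.
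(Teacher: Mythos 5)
Your proposal is correct and matches the paper's (implicit) reasoning exactly: the corollary is stated as an immediate consequence of Theorem \ref{mytheorem1}, obtained by observing that a cubic graph satisfies $\delta(G)=\Delta(G)=3$, hence $2\leq\delta(G)\leq\Delta(G)\leq 3$. Nothing further is needed.
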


In the same paper they posed the following

\begin{conjecture}\label{vahanconjecture}
Every graph $G$ with $\Delta(G)-\delta(G)\leq 1$ contains a maximum
matching whose unsaturated vertices do not have a common neighbor.
\end{conjecture}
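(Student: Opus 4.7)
The plan is to proceed by contradiction, combining a local exchange argument with the Edmonds--Gallai decomposition. Assume $G$ satisfies $\Delta(G)-\delta(G)\le 1$ and that every maximum matching $M$ of $G$ has its set of unsaturated vertices $U(M)$ sharing a common neighbor. Among all such maximum matchings I would choose one minimizing the number of unsaturated vertices adjacent to their common neighbor $v$, and secondarily minimizing $|U(M)|$. The target is to show that this minimum must be at most $1$, contradicting the assumption.

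I would begin with standard reductions: $U(M)$ is independent by Berge's theorem, $|U(M)|\ge 2$, and the common neighbor $v$ is itself $M$-saturated, for otherwise $vu$ would be an augmenting edge for any $u\in U(M)$. Writing $vw\in M$, consider the exchange $M'=(M\setminus\{vw\})\cup\{vu\}$ for some $u\in U(M)$; this produces another maximum matching with $U(M')=(U(M)\setminus\{u\})\cup\{w\}$. By assumption $U(M')$ has a common neighbor $v'$, and the degree bound $d(x)\in\{\delta(G),\delta(G)+1\}$ for every $x$ would be used to restrict how $v'$ can differ from $v$. Iterating the exchange over different choices of $u$, and tracking the alternating paths from $U(M)$ through $v$ and $v'$, I would aim either to uncover an augmenting path (contradicting maximality) or to produce a maximum matching that strictly improves the chosen minimum.

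A complementary global step invokes the Edmonds--Gallai decomposition $V(G)=D\cup A\cup C$, where $D$ is the set of vertices unsaturated in some maximum matching, $A=N(D)\setminus D$, and $C$ the remainder. The hypothesis, applied across all maximum matchings, forces the common neighbors to lie in $A$ and constrains each factor-critical component of $G[D]$ to be essentially controlled by a single vertex of $A$. A degree-counting argument using the almost-regularity---summing $\delta(G)\le d(x)\le \delta(G)+1$ over the vertices of a factor-critical component and over its attachment to $A$---should then yield a parity or divisibility contradiction, mirroring how the proof of Theorem~\ref{mytheorem1} exploits the very limited local structure available when $\Delta(G)\le 3$.

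The main obstacle, and the step I expect to be genuinely hard, is controlling the shifting of the common neighbor across different maximum matchings: a priori the neighbors $v,v',v'',\ldots$ obtained by successive exchanges need not coincide, and the condition $\Delta-\delta\le 1$ gives only weak local rigidity once $\Delta\ge 4$. The $\Delta\le 3$ case benefits from having just two tiny degrees, which pins down neighborhoods almost completely; for larger $\Delta$ the combinatorics around $v$ and $v'$ admits many more configurations and there is no evident reason that iterated swaps should terminate in a contradiction. This is precisely the point at which the natural generalization of the proof technique for Theorem~\ref{mytheorem1} threatens to break down, and it suggests that any proof of the conjecture for $\Delta\ge 4$---if one exists---must bring in genuinely new structural input beyond local exchanges and the Edmonds--Gallai framework.
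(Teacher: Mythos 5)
There is a fundamental problem with the proposal: the statement you are trying to prove is false, and the paper's entire content is a family of explicit counterexamples to it (the conjecture is quoted only to be refuted). Picouleau had already disproved it with a $(5,4)$-biregular bipartite graph, and the paper extends this to every $\Delta(G)-\delta(G)=1$ with $\Delta(G)\geq 4$ and to all $r$-regular graphs with $r\geq 7$. The smallest instance in the paper, the graph $B_{4,3}$ from Theorem~\ref{mytheorem2} with $r=2$, already kills your argument: take $U=\{u_{(i,j)}:1\leq i<j\leq 4\}$, $V=\{v_{1}^{(i)},v_{2}^{(i)}:1\leq i\leq 4\}$, and join $u_{(i,j)}$ to $v_{k}^{(i)}$ and $v_{k}^{(j)}$ for $k=1,2$. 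This is $(4,3)$-biregular, $|U|=6<8=|V|$, every maximum matching saturates $U$ (Hall's condition) and leaves exactly two vertices of $V$ unsaturated, and \emph{any} two vertices of $V$ have a common neighbor in $U$ by construction. So the extremal quantity you propose to drive down to $1$ is identically $2$ over all maximum matchings of this graph, and no sequence of exchanges can change that.

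Concretely, the step that fails is the claim that iterated $vw\mapsto vu$ swaps, combined with the Edmonds--Gallai decomposition and degree counting, must eventually produce either an augmenting path or a strict improvement. In $B_{4,3}$ each swap replaces one unsaturated vertex of $V$ by another vertex of $V$, the new pair again has a common neighbor, and the process cycles without improvement; the Edmonds--Gallai counting likewise closes up consistently because the graph genuinely has this property. Your own closing paragraph correctly identifies that almost-regularity with $\Delta\geq 4$ provides too little local rigidity --- the right conclusion to draw from that observation is not that a new structural input is needed, but that one should look for a counterexample, which is exactly what the paper does. The only cases where a proof could still be sought are the ones the paper leaves open: $\Delta(G)=4$, $\delta(G)=3$ for non-bipartite-type constructions is settled here, and for regular graphs only $r\in\{4,5,6\}$ remain undecided.
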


Also, they noted that they do not even know, whether this conjecture
holds for $r$-regular graphs with $r\geq 4$. In \cite{b2}, Picouleau
showed that Conjecture \ref{vahanconjecture} is false when $G$ is a
$(5,4)$-biregular bipartite graph. However, the question is still
open when $\Delta(G)=4$ and $\delta(G)=3$, and when
$\Delta(G)-\delta(G)=1$ with $\Delta(G)\geq 6$. Also, the question
remains open for $r$-regular graphs when $r\geq 4$.

In this note we prove that for any $r\geq 2$, there exists a
$(2r,2r-1)$-biregular bipartite graph $G$ such that for any maximum
matching of $G$ and any pair of unsaturated vertices with respect to
this maximum matching, these vertices have a common neighbor. Next
we show that for any $r\geq 3$, there exists a graph $G$ with
$\Delta(G)=2r+1$ and $\delta(G)=2r$ such that for any maximum
matching of $G$, there is a pair of unsaturated vertices
 with a common neighbor. Finally, we prove that for any $r\geq 3$,
 there exists a $(2r+1)$-regular graph $G$ such that for any
maximum matching of $G$, there is a pair of unsaturated vertices
with a common neighbor. We also construct the $8$-regular graph with
the same property and prove that for any $r\geq 5$, there exists a
$2r$-regular graph $G$ such that for any maximum matching of $G$ and
any pair of unsaturated vertices with respect to this maximum
matching, these vertices have a common neighbor.
\bigskip

\section{Results}\

First we consider graphs $G$ with $\Delta(G)-\delta(G)=1$.

\begin{theorem}
\label{mytheorem2} For any $r\geq 2$, there exists a
$(2r,2r-1)$-biregular bipartite graph $G$ such that for any maximum
matching of $G$ and any pair of unsaturated vertices with respect to
this maximum matching, these vertices have a common neighbor.
\end{theorem}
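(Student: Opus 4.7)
The plan is to give an explicit construction. Let $X$ denote the part of degree $2r$ and $Y$ the part of degree $2r-1$, and take $|X|=4r-2$ and $|Y|=4r$ (the smallest sizes forcing at least two unsaturated vertices). A standard Hall-type argument applied to $X$---for every $S\subseteq X$, the $2r|S|$ edges out of $S$ are caught by vertices of degree at most $2r-1$, giving $|N(S)|\geq 2r|S|/(2r-1)\geq |S|$---shows that $X$ is always saturable. Hence every maximum matching has size $|X|$ and leaves exactly $|Y|-|X|=2$ vertices of $Y$ unmatched, so the goal reduces to ensuring that \emph{every} pair $\{u,v\}\subseteq Y$ has a common neighbor in $X$.

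I first record the design obstruction that dictates the construction. For any $u,v\in Y$ one has $|N(u)|+|N(v)|=2(2r-1)=|X|$, so $N(u)\cap N(v)=\emptyset$ if and only if $N(u)$ and $N(v)$ partition $X$. In that case $G-u-v$ is a $(2r-1)$-regular bipartite graph on two sides of size $4r-2$, which necessarily contains a perfect matching, yielding a maximum matching of $G$ whose unsaturated set is exactly $\{u,v\}$. Thus the combinatorial criterion to enforce is precisely that no two vertices of $Y$ have complementary neighborhoods in $X$.

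To meet this criterion I introduce an auxiliary $(2r-1)$-regular multigraph $H$ on the vertex set $\{1,2,3,4\}$ in which every two vertices are joined by at least one edge. An explicit choice is $K_4$ together with $2r-4$ parallel edges between vertices $1$ and $2$ and $2r-4$ parallel edges between vertices $3$ and $4$ (so for $r=2$, simply $H=K_4$). Then $|E(H)|=4r-2$ and every vertex of $H$ has degree $2r-1$. Now define $G$ by setting $X:=E(H)$, partitioning $Y$ into four groups $Y_1,Y_2,Y_3,Y_4$ of $r$ twin vertices each, and joining every $y\in Y_i$ to each edge of $H$ incident to vertex $i$. Biregularity is immediate: $\deg_G(y)=\deg_H(i)=2r-1$ for $y\in Y_i$, and every edge $e=\{i,j\}\in E(H)$ is adjacent in $G$ to the $r+r=2r$ twins in $Y_i\cup Y_j$. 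Moreover, any two vertices of $Y$ lie in groups $Y_i,Y_j$ for some (possibly equal) $i,j$, and they share as a common neighbor any edge of $H$ between $i$ and $j$ (or all $2r-1$ shared neighbors when $i=j$), which is nonempty by construction of $H$.

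The main obstacle is conceptual rather than computational: one must first recognize that the saturation count reduces the problem to a purely combinatorial one---avoid complementary neighborhood pairs in $Y$---after which the twin-expansion of a suitable four-vertex $(2r-1)$-regular multigraph whose underlying simple graph is $K_4$ provides a uniform construction for all $r\geq 2$.
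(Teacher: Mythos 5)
Your construction is correct and follows the same template as the paper's: both build the bipartite incidence graph between the edge set and the $r$-fold twin-expansion of the vertex set of an auxiliary $(2r-1)$-regular graph in which every two vertices are adjacent, both invoke Hall's condition to show the degree-$2r$ side is saturated by every maximum matching, and both get the common neighbor from either a shared twin-class or an auxiliary edge joining the two classes. The only real difference is the choice of auxiliary graph --- the paper uses $K_{2r}$ while you use a $4$-vertex multigraph containing $K_4$ --- which makes your example smaller ($8r-2$ vertices versus $4r^2-r$) but leaves the argument essentially unchanged.
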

\begin{proof}
For the proof, we construct a graph $B_{2r,2r-1}$ for $r\geq 2$ that
satisfies the specified conditions. We define a graph $B_{2r,2r-1}$
as follows: $V\left(B_{2r,2r-1}\right)=U\cup V$, where
\begin{center}
$U=\left\{u_{(i,j)}\colon\,1\leq i<j\leq 2r\right\}$,
$V=\left\{v_{1}^{(i)},\ldots,v_{r}^{(i)}\colon\,1\leq i\leq
2r\right\}$, and
\end{center}

\begin{center}
$E\left(B_{2r,2r-1}\right)=\left\{v_{k}^{(i)}u_{(i,j)},v_{k}^{(j)}u_{(i,j)}\colon\,1\leq
i<j\leq 2r,1\leq k\leq r\right\}$.
\end{center}

Clearly, $B_{2r,2r-1}$ is a $(2r,2r-1)$-biregular bipartite graph
with a bipartition $(U,V)$. Moreover, let us note that $\vert U\vert
=\binom{2r}{2}=2r^{2}-r$ and $\vert V\vert =2r^{2}$. Thus,
$B_{2r,2r-1}$ has no perfect matching. On the other hand, by Hall's
theorem, it is not hard to see that each maximum matching $M$
saturates $U$. This implies that for any maximum matching $M$ of
$B_{2r,2r-1}$, we have $r$ unsaturated vertices from $V$. Now let
$v_{k_{0}}^{(i)}$ and $v_{k_{1}}^{(j)}$ be any pair of unsaturated
vertices from $V$ with respect to some maximum matching $M$. We
consider two cases.

Case 1: $i=j$.

In this case, by the construction of $B_{2r,2r-1}$,
$v_{k_{0}}^{(i)}$ and $v_{k_{1}}^{(i)}$ have a common neighbor
$u_{(i,l)}$ with $i<l$ or $u_{(l,i)}$ with $l<i$.

Case 2: $i\neq j$.

In this case, by the construction of $B_{2r,2r-1}$,
$v_{k_{0}}^{(i)}$ and $v_{k_{1}}^{(j)}$ have a common neighbor
$u_{(i,j)}$ if $i<j$ or $u_{(j,i)}$ if $j<i$.~$\square$
\end{proof}

\begin{theorem}
\label{mytheorem3} For any $r\geq 3$,
\begin{description}
\item[(1)] there exists a $(2r+1)$-regular graph $G$ such that for any
maximum matching of $G$, there is a pair of unsaturated vertices
with a common neighbor,
\item[(2)] there exists a graph $H$ with $\Delta(H)=2r+1$ and $\delta(H)=2r$
such that for any maximum matching of $H$, there is a pair of
unsaturated vertices with a common neighbor.
\end{description}
\end{theorem}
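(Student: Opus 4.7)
The plan is to build both $G$ and $H$ by augmenting the bipartite graph $B_{2r,2r-1}$ from the proof of Theorem~\ref{mytheorem2}, whose crucial feature is that every two vertices of the side $V$ share a common neighbor in $U$. In each case the strategy is to add edges (and possibly multi-edges, which the paper explicitly allows) so that the degree sequence lands in the prescribed band $\{2r,2r+1\}$ or the single value $2r+1$, while preserving a Tutte--Berge-type barrier that forces every maximum matching to leave at least two vertices unsaturated inside $V$; the common-neighbor property inherited from Theorem~\ref{mytheorem2} then closes the argument automatically.

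For (2), I would augment $B_{2r,2r-1}$ by adding inside $V$ a sparse 2-regular or nearly-2-regular subgraph --- for example a small collection of long odd cycles, possibly supplemented by a partial matching --- chosen so that every $v\in V$ has new degree in $\{2r,2r+1\}$, together with one extra edge inside $U$ to lift a pair of $U$-vertices to degree $2r+1$. The resulting graph $H$ then satisfies $\delta(H)=2r$ and $\Delta(H)=2r+1$. The heart of the argument is the deficiency computation: using the Tutte--Berge formula with $S=U$, I would bound the maximum matching of the subgraph induced on $V$ (which is a disjoint union of odd cycles and/or short paths) and combine it with the bipartite matching to show that no matching of $H$ saturates more than $|V|-2$ vertices of $V$. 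Since the unsaturated vertices then lie entirely in $V$, any two of them share a common neighbor in $U$ by the analysis in Theorem~\ref{mytheorem2}.

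For (1), I would take the graph $H$ of (2) and further symmetrise it by adding a 1-regular graph on $U$ (a perfect matching, or, when $|U|=2r^{2}-r$ is odd, a near-perfect matching together with one multi-edge) and by extending the auxiliary structure on $V$ to be uniformly 2-regular; every vertex of the resulting graph $G$ then has degree exactly $2r+1$. Adding edges inside $U$ can only decrease the maximum matching size relative to $H$ (each $U$-$U$ matching edge removes a potential $U$-$V$ match), so the deficiency bound from (2) survives, and the common-neighbor argument goes through unchanged.

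The main obstacle is proving the deficiency bound itself. The na\"ive augmentation --- adding a plain perfect matching inside $V$ --- wipes out the imbalance $|V|-|U|=r$ and typically produces a graph with a (near-)perfect matching, which would ruin the counterexample. Controlling the interaction between the added $V$-structure and the bipartite part of $B_{2r,2r-1}$, so that a Tutte--Berge barrier with $o(G-S)-|S|\geq 2$ survives, is the delicate combinatorial step; this is also where the hypothesis $r\geq 3$ enters, since for $r=2$ the parameters do not leave enough room for such a barrier to coexist with $\Delta-\delta=1$ and $\Delta=5$ --- precisely the case covered by Picouleau's separate $(5,4)$-biregular counterexample.
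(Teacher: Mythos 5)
There is a genuine gap, and it sits exactly where you flag it: the deficiency bound for your augmented graph is not merely ``delicate'' --- the barrier you propose cannot certify it. To raise every $V$-vertex of $B_{2r,2r-1}$ from degree $2r-1$ to at least $2r$ using edges inside $V$, every vertex of $V$ must meet at least one added edge, so $H-U$ has no isolated vertices and hence at most $|V|/2=r^{2}$ components. The Tutte--Berge barrier $S=U$ therefore gives $o(H-U)-|U|\leq r^{2}-(2r^{2}-r)<0$, which proves nothing, and you exhibit no other barrier. Worse, the natural augmentations really do destroy the counterexample: one has enough freedom in choosing which $r$ vertices of $V$ a maximum matching of $B_{2r,2r-1}$ leaves unsaturated that two of them can be taken to be endpoints of an added edge, producing a matching of deficiency at most $r-2$ and, iterating, typically a (near-)perfect matching. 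Your supporting claim that adding a matching inside $U$ ``can only decrease the maximum matching size'' is false --- adding edges to a graph never decreases the matching number, so every augmentation works against you. Finally, part (1) has a parity obstruction in your scheme: when $r$ is odd, $|U|=2r^{2}-r$ is odd, so no graph on $U$ can raise every $U$-vertex's degree by exactly $1$ (the added degrees would sum to an odd number); a near-perfect matching plus a doubled edge leaves one vertex at degree $2r$ or pushes two to $2r+2$.

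The paper's construction is structurally the opposite of yours: instead of a huge barrier $U$, it uses a tiny one. The graph $G_{2r+1}$ consists of three hubs $x,y,z$ and $2r+1$ disjoint triangles $v_{1}^{(i)}v_{2}^{(i)}v_{3}^{(i)}$ whose edges have multiplicity $r$, with $v_{k}^{(i)}$ joined by a single edge to the $k$-th hub; every degree is $2r+1$. Deleting $S=\{x,y,z\}$ leaves $2r+1$ odd components, so by Tutte--Berge the deficiency is $2r-2\geq 4$, every maximum matching saturates the hubs, and by pigeonhole two of the at least four unsaturated triangle vertices carry the same index $k$ and hence share the hub neighbor. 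Part (2) follows by removing an edge $v_{3}^{(i)}v_{1}^{(i)}$ for each $i$. Note also that Theorem~\ref{mytheorem3} only asks for \emph{some} pair of unsaturated vertices with a common neighbor --- which is all the pigeonhole delivers --- whereas you are trying to inherit the stronger ``every pair'' property of Theorem~\ref{mytheorem2}, making your task strictly harder than necessary.
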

\begin{proof}(1)
For the proof, we are going to construct a graph $G_{2r+1}$ for
$r\geq 3$ that satisfies the specified conditions. We define a graph
$G_{2r+1}$ as follows:
\begin{center}
$V\left(G_{2r+1}\right)=\left\{x,y,z\right\}\cup
\left\{v_{1}^{(i)},v_{2}^{(i)},v_{3}^{(i)}\colon\,1\leq i\leq
2r+1\right\}$ and
\end{center}
\begin{center}
$E\left(G_{2r+1}\right)=\left\{xv_{1}^{(i)},yv_{2}^{(i)},zv_{3}^{(i)}\colon\,1\leq
i\leq 2r+1\right\}\cup
\left\{v_{1}^{(i)}v_{2}^{(i)},v_{2}^{(i)}v_{3}^{(i)},v_{3}^{(i)}v_{1}^{(i)}\colon\,
\left\vert E\left(v_{1}^{(i)}v_{2}^{(i)}\right)\right\vert
=\left\vert E\left(v_{2}^{(i)}v_{3}^{(i)}\right)\right\vert =
\left\vert E\left(v_{3}^{(i)}v_{1}^{(i)}\right)\right\vert = r,
1\leq i\leq 2r+1\right\}$.
\end{center}
Clearly, $G_{2r+1}$ is a $(2r+1)$-regular graph with $\left\vert
V\left(G_{2r+1}\right)\right\vert=6r+6$. By Tutte's theorem,
$G_{2r+1}$ has no perfect matching. On the other hand, it is not
hard to see that each maximum matching $M$ saturates $x,y$ and $z$.
This implies that for any maximum matching $M$ of $G_{2r+1}$, we
have $2r-2$ unsaturated vertices from
$V\left(G_{2r+1}\right)\setminus\{x,y,z\}$. Since $r\geq 3$, we have
that for any maximum matching $M$ of $G_{2r+1}$, the graph
$G_{2r+1}$ has at least four unsaturated vertices from
$V\left(G_{2r+1}\right)\setminus\{x,y,z\}$. However, by the
construction of $G_{2r+1}$, the vertices from
$V\left(G_{2r+1}\right)\setminus\{x,y,z\}$ have only three possible
values of the subindex; thus there are two vertices with the same
subindex. Let $v_{k}^{(i)}$ and $v_{k}^{(j)}$ be these unsaturated
vertices from $V\left(G_{2r+1}\right)\setminus\{x,y,z\}$ with
respect to some maximum matching $M$. If $k=1$, then, by the
construction of $G_{2r+1}$, $v_{1}^{(i)}$ and $v_{1}^{(j)}$ have a
common neighbor $x$, if $k=2$, then, by the construction of
$G_{2r+1}$, $v_{2}^{(i)}$ and $v_{2}^{(j)}$ have a common neighbor
$y$, and if $k=3$, then, by the construction of $G_{2r+1}$,
$v_{3}^{(i)}$ and $v_{3}^{(j)}$ have a common neighbor $z$.

(2) For the proof, it suffices to define a graph $H_{2r+1,2r}$ for
$r\geq 3$ as follows:
$V\left(H_{2r+1,2r}\right)=V\left(G_{2r+1}\right)$ and
$E\left(H_{2r+1,2r}\right)=E\left(G_{2r+1}\right)\setminus
\left\{v_{3}^{(i)}v_{1}^{(i)}\colon\,1\leq i\leq 2r+1\right\}$.
Clearly, $H_{2r+1,2r}$ is a graph with
$\Delta\left(H_{2r+1,2r}\right)=2r+1$ and
$\delta\left(H_{2r+1,2r}\right)=2r$. Similarly as in the proof of
(1) it can be shown that for any maximum matching of $H_{2r+1,2r}$,
there are two unsaturated vertices with a common neighbor.~$\square$
\end{proof}

These results combined with the result of Picouleau show that
Conjecture \ref{vahanconjecture} is false for graphs $G$ with
$\Delta(G)-\delta(G)=1$ and $\Delta(G)\geq 4$. Also, Conjecture
\ref{vahanconjecture} is false for $(2r+1)$-regular graphs when
$r\geq 3$. Next we consider even regular graphs. First we consider
the $8$-regular graph $G$ shown in Fig. \ref{8-regular graph}.
Similarly as in the proof of Theorem \ref{mytheorem3} it can be
shown that for any maximum matching of $G$, there are two
unsaturated vertices with a common neighbor.

\begin{figure}[h]
\begin{center}
\includegraphics[width=30pc,height=25pc]{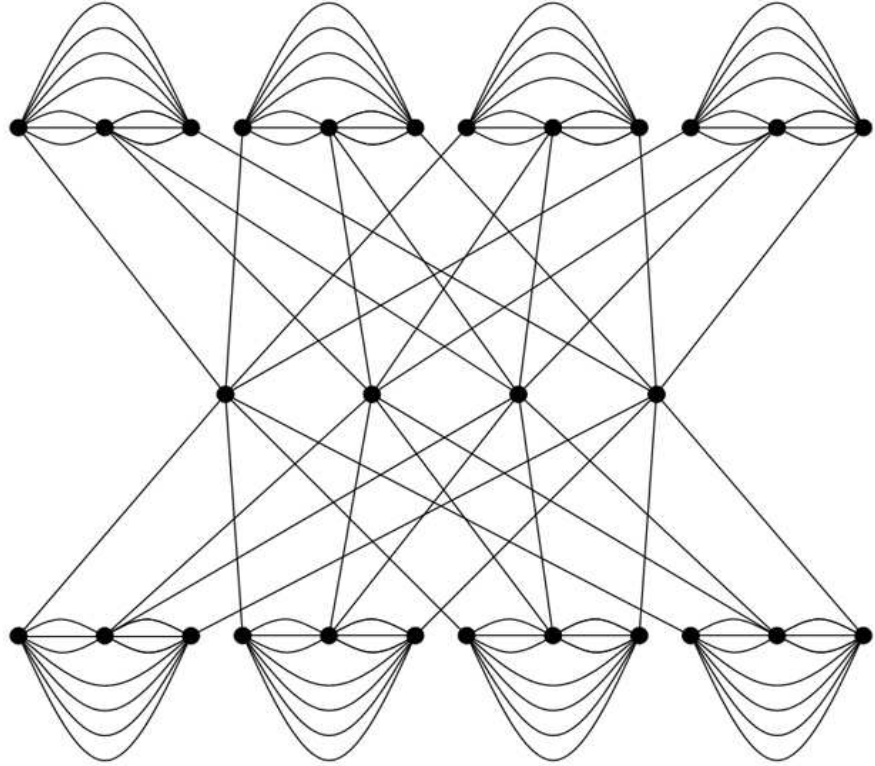}\\
\caption{The $8$-regular graph $G$.}\label{8-regular graph}
\end{center}
\end{figure}

\begin{theorem}
\label{mytheorem4} For any $r\geq 5$, there exists a $2r$-regular
graph $G$ such that for any maximum matching of $G$ and any pair of
unsaturated vertices with respect to this maximum matching, these
vertices have a common neighbor.
\end{theorem}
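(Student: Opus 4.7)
The plan is to mimic the construction of $G_{2r+1}$ from Theorem~\ref{mytheorem3}, adjusted so that every degree is even. Concretely, I would let $G_{2r}$ consist of three ``hubs'' $x_{1},x_{2},x_{3}$ together with $r$ groups $V^{(i)}=\left\{v_{1}^{(i)},v_{2}^{(i)},v_{3}^{(i)}\right\}$ for $1\leq i\leq r$; inside each $V^{(i)}$ I place $r-1$ parallel edges between every pair of vertices, and I join each $v_{k}^{(i)}$ to both hubs $x_{k}$ and $x_{(k\bmod 3)+1}$. A direct degree count then shows that $G_{2r}$ is $2r$-regular on $3+3r$ vertices, with each $V^{(i)}$ a factor-critical multi-triangle.

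Next, I would pin down the maximum matching size. Taking $S=\{x_{1},x_{2},x_{3}\}$, the graph $G_{2r}-S$ splits into exactly $r$ odd components---the multi-triangles $V^{(i)}$---so the Tutte--Berge formula yields a matching deficiency of at least $r-3$. The bound is tight: matching each $x_{k}$ to a vertex of a distinct group, covering the two remaining vertices of that group by a parallel internal edge, and covering each of the other $r-3$ groups by a single internal edge, produces a matching of size exactly $r+3$. Hence every maximum matching of $G_{2r}$ has size $r+3$ and leaves precisely $r-3$ unsaturated vertices.

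The structural claim I would then establish is that in \emph{every} maximum matching $M$: (I) all three hubs are saturated, and (II) each group $V^{(i)}$ contains at most one unsaturated vertex. Granted (I), each hub is matched to some $v_{k}^{(i)}$ and the other two vertices of that group are forced to form a parallel internal edge, so each of the remaining $r-3$ groups contributes one internal edge leaving a single uncovered vertex, which gives (II). Claim~(I) is the main obstacle; I would prove it either by invoking the Gallai--Edmonds decomposition---identifying $A=\{x_{1},x_{2},x_{3}\}$, $D$ as the union of all groups, and $C$ as empty---or, in the direct spirit of Theorem~\ref{mytheorem3}, by a short case analysis that enumerates the possible matching patterns inside a group when some hub is left unmatched and shows that the total number of matching edges then cannot exceed $r+2$, contradicting the size $r+3$ already established.

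Finally, for $r\geq 5$ we obtain at least $r-3\geq 2$ unsaturated vertices, and by (II) any two of them lie in distinct groups $V^{(i)}$ and $V^{(j)}$. Any such pair $v_{a}^{(i)},v_{b}^{(j)}$ with $i\neq j$ has hub-neighbourhoods equal to the $2$-subsets $\{x_{a},x_{a+1}\}$ and $\{x_{b},x_{b+1}\}$ of $\{x_{1},x_{2},x_{3}\}$ (indices mod~$3$); since any two $2$-subsets of a $3$-element set intersect, the two vertices share a common hub, which is the required common neighbour.
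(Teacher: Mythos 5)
Your construction is isomorphic to the paper's graph $F_{2r}$ (three hubs, $r$ multi-triangles with $r-1$ parallel edges per pair, each triangle vertex joined to two of the three hubs), and your argument---deficiency $r-3$ via Tutte--Berge, every maximum matching saturating the hubs and leaving at most one vertex per group, and the final observation that any two $2$-subsets of a $3$-set intersect---is essentially the paper's proof with the routine verifications spelled out in more detail. The proposal is correct and takes the same approach.
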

\begin{proof} For the proof, we construct a graph $F_{2r}$ for
$r\geq 5$ that satisfies the specified conditions. We define a graph
$F_{2r}$ as follows:
\begin{center}
$V\left(F_{2r}\right)=\left\{x,y,z\right\}\cup
\left\{v_{1}^{(i)},v_{2}^{(i)},v_{3}^{(i)}\colon\,1\leq i\leq
r\right\}$ and
\end{center}
\begin{center}
$E\left(F_{2r}\right)=\left\{xv_{1}^{(i)},xv_{2}^{(i)},yv_{1}^{(i)},yv_{3}^{(i)},zv_{2}^{(i)},zv_{3}^{(i)}\colon\,1\leq
i\leq r\right\}\cup
\left\{v_{1}^{(i)}v_{2}^{(i)},v_{2}^{(i)}v_{3}^{(i)},v_{3}^{(i)}v_{1}^{(i)}\colon\,
\left\vert E\left(v_{1}^{(i)}v_{2}^{(i)}\right)\right\vert
=\left\vert E\left(v_{2}^{(i)}v_{3}^{(i)}\right)\right\vert =
\left\vert E\left(v_{3}^{(i)}v_{1}^{(i)}\right)\right\vert = r-1,
1\leq i\leq r\right\}$.
\end{center}
Clearly, $F_{2r}$ is a $2r$-regular graph with $\left\vert
V\left(F_{2r}\right)\right\vert=3r+3$. By Tutte's theorem, $F_{2r}$
has no perfect matching. On the other hand, it is not hard to see
that each maximum matching $M$ saturates $x,y$ and $z$. This implies
that for any maximum matching $M$ of $F_{2r}$, we have $r-3$
unsaturated vertices from $V\left(F_{2r}\right)\setminus\{x,y,z\}$.
Since $r\geq 5$, we have that for any maximum matching $M$ of
$F_{2r}$, the graph $F_{2r}$ has at least two unsaturated vertices
from $V\left(F_{2r}\right)\setminus\{x,y,z\}$. Now let
$v_{k_{0}}^{(i)}$ and $v_{k_{1}}^{(j)}$ be any pair of unsaturated
vertices from $V\left(F_{2r}\right)\setminus\{x,y,z\}$ with respect
to some maximum matching $M$. We consider two cases.

Case 1: $k_{0}=k_{1}$.

If $k_{0}=k_{1}=1$ or $k_{0}=k_{1}=2$, then, by the construction of
$F_{2r}$, $v_{k_{0}}^{(i)}$ and $v_{k_{1}}^{(j)}$ have a common
neighbor $x$. If $k_{0}=k_{1}=3$, then, by the construction of
$F_{2r}$, $v_{k_{0}}^{(i)}$ and $v_{k_{1}}^{(j)}$ have a common
neighbor $y$.

Case 2: $k_{0}\neq k_{1}$.

If $(k_{0},k_{1})=(1,2)$, then, by the construction of $F_{2r}$,
$v_{k_{0}}^{(i)}$ and $v_{k_{1}}^{(j)}$ have a common neighbor $x$,
if $(k_{0},k_{1})=(1,3)$, then, by the construction of $F_{2r}$,
$v_{k_{0}}^{(i)}$ and $v_{k_{1}}^{(j)}$ have a common neighbor $y$,
and if $(k_{0},k_{1})=(2,3)$, then, by the construction of $F_{2r}$,
$v_{k_{0}}^{(i)}$ and $v_{k_{1}}^{(j)}$ have a common neighbor
$z$.~$\square$
\end{proof}

Our results show that Conjecture \ref{vahanconjecture} is also false
for $r$-regular graphs when $r\geq 7$. Thus, the question remains
open only for $4,5$ and $6$-regular graphs.

\begin{acknowledgement}
We would like to thank the anonymous referees for useful
suggestions.
\end{acknowledgement}

\end{document}